\newtheorem{theorem}{Theorem}
\newtheorem{lemma}[theorem]{Lemma}
\newtheorem{proposition}[theorem]{Proposition}
\newtheorem{corollary}[theorem]{Corollary}
\theoremstyle{definition}
\theoremstyle{remark}
\newcommand{\G}{\mathcal{G}}
\begin{document}
\newcommand{\Addresses}{
\bigskip
\footnotesize

\medskip

\noindent Jorge Cruz Chapital, \textsc{Department of Mathematics, University of Toronto, Toronto, M5S 2E4, Canada.}\par\noindent\nopagebreak\textit{Email address: }\texttt{cruz.chapital at utoronto.ca}

\medskip

\noindent Tom\'a\v s~Fl\'idr, \textsc{Peterhouse, University of Cambridge, CB2 1RD, UK.}\par\noindent\nopagebreak\textit{Email address: }\texttt{tf388@cam.ac.uk}

\medskip

\noindent Maria-Romina~Ivan, \textsc{Department of Pure Mathematics and Mathematical Statistics, Centre for Mathematical Sciences, Wilberforce Road, Cambridge, CB3 0WB, UK,} and\\\textsc{Department of Mathematics, Stanford University, 450 Jane Stanford Way, CA 94304, USA.}\par\noindent\nopagebreak\textit{Email addresses: }\texttt{mri25@dpmms.cam.ac.uk, m.r.ivan@stanford.edu}}

\pagestyle{fancy}
\fancyhf{}
\fancyhead [LE, RO] {\thepage}
\fancyhead [CE] {JORGE CRUZ CHAPITAL, TOM\'A\v S FL\'IDR AND MARIA-ROMINA IVAN}
\fancyhead [CO] {ALL ORDINALS ARE COP-ROBBER ORDINALS}
\renewcommand{\headrulewidth}{0pt}
\renewcommand{\l}{\rule{6em}{1pt}\ }
\title{\Large{\textbf{ALL ORDINALS ARE COP-ROBBER ORDINALS}}}
\author{JORGE CRUZ CHAPITAL, TOM\'A\v S FL\'IDR AND MARIA-ROMINA IVAN}
\date{ }
\maketitle
\begin{abstract}
The game of cops and robbers, played on a fixed graph $G$, is a two-player game, where the cop and the robber (the players) take turns in moving to adjacent vertices. The game finishes if the cop lands on the robber's vertex. In that case we say that the cop wins. If the cop can always win, regardless of the starting positions, we say that $G$ is a cop-win graph.

For a finite cop-win graph $G$ we can ask for the minimum number $n$ such that, regardless of the starting positions, the game will end in at most $n$ steps. This number is called the maximum capture time of $G$. By looking at finite paths, we see that any non-negative integer is the maximum capture time for a cop-win graph. 

What about infinite cop-win graphs? In this case, the notion of capture time is nicely generalised if one works with ordinals, and so the question becomes which ordinals can be the maximum capture time of a cop-win graph? These ordinals are called CR (Cop-Robber)-ordinals. In this paper we fully settle this by showing that all ordinals are CR-ordinals, answering a question of Bonato, Gordinowicz and Hahn.
\end{abstract}
\section{Introduction}
Let $G$ be a fixed simple graph without multiple edges. Two players, the cop and the robber, each pick a starting vertex, with the cop picking first. Then they move alternately, with the cop moving first: at each turn the player moves to an adjacent vertex or does not move. The game is won by the cop if he lands on the robber. We say that $G$ is \textit{cop-win} if the cop has a winning strategy.

A central notion for this problem is that of a `dominated' vertex. For a vertex $v\in V(G)$ we denote by $N[v]$ its \textit{closed neighborhood}: its neighbors, together with $v$. We say that a vertex $y$ \textit{dominates} a vertex $x$ if $N[x]\subseteq N[y]$. The finite cop-win graphs were characterised by Nowakowski and Winkler \cite{NW}. They showed that a finite graph $G$ is cop-win if and only if it is constructible, meaning that it can be built up from the one-point graph by repeatedly adding dominated vertices. There are multiple variations of this game for finite graphs, such as allowing more than one cop to play. We direct the interested reader to the book of Bonato and Nowakowski \cite{BN} for general background and a wealth of other results in the finite case.

We now turn to infinite graphs, where the game of cops and robbers has the exact same rules as before, but the above notions behave very differently. For example, \textit{a ray} -- infinite path with one end -- is constructible, but it is trivially not a cop-win graph. Conversely, there exist cop-win graphs that are not constructible -- for this, and some other phenomena, see Ivan, Leader and Walters \cite{ILW}. We mention that currently, unlike the finite graphs setup, there is no structural characterization for infinite cop-win graphs.

From now on, all the graphs will be assumed to be cop-win. For a finite graph, we can define \textit{the capture time} to be the length of the game, assuming both players play optimally. That means that the cop chooses her starting position as favorably as possible, and so does the robber. After that, every cop's next move will be to minimise the time she needs to capture the robber, and every robber's next move will be to escape for as long as possible. For example, if $G=P_{2n+1}$, the capture time is $n$. How can we generalise this notion for infinite graphs?

Let $G$ be a finite cop-win graph. We denote by $\eta(u,v)$ the time it will take a cop at $v$ to capture a robber at $u$, with the robber moving first. Let $\eta(v)=\sup_{u\in V(G)}\eta(u,v)$, or in words, how long the game lasts if the cop starts at $v$. Therefore, the capture time, denoted by $\eta(G)$ is $\min_{v\in V(G)}\eta(v)$. However, the quantity we will be interested in is \textit{the maximum capture time}, denoted by $\rho(G)$. This is equal to $\sup_{u,v\in V(G)}\eta(u,v)$, or the maximum length the game can possibly last.

In the realm of infinite graphs, all these notions generalise nicely if one works with ordinals. We make things precise below.

Let $G$ be a graph of cardinality $\aleph_{\beta}$, where $\beta\geq 0$ is an ordinal. Set $\omega(G)=\omega_{\beta+1}$, and define the relations $\{\leq\alpha\}_{\alpha<\omega(G)}$ on $V(G)$ as follows. If $u = v$, then $u\leq_0 v$. Also, $u\leq_{\alpha} v$ if for all $x\in N[u]$, there exists $y\in N[v]$ such that $x\leq_{\gamma} y$ for some $\gamma<\alpha$.

Now, for any two vertices $u, v$, we define $\eta(u, v)=\alpha $, where $\alpha$ is the minimum ordinal for which $u\leq_\alpha v$ holds. We observe that if $\eta(u, v)$ is finite, then it is precisely equal to the time it takes the cop at $v$ to catch the robber at $u$, assuming both play optimally, and the robber moves first. Define $\eta(v)$ to be the minimum ordinal $\alpha$ such that $u\leq_{\alpha} v$ holds for all $u\in V(G)$. Finally, define $\eta(G) = \min_{v\in V(G)}\eta(v)$. One can check that if $\eta(G)$ is finite, then it is precisely the capture time of the cop-win graph $G$. Let also $\rho(G) =\sup_{v\in V(G)}\eta(v)$. In the finite case, $\rho(G)$ is the maximum capture time over all initial positions of the cop.

One of the main questions is what `times', hence ordinals, can the maximum capture time, $\rho(G)$ be? Such ordinals are known as \textit{CR-ordinals}. By considering finite paths, every finite ordinal is a CR-ordinal. Bonato, Gordinowicz and Hahn \cite{BGH} showed that all ordinals in $\{\omega\cdot i + (i + j): i, j <\omega\}\cup\{\alpha +\omega: \alpha\text{ is a limit ordinal}\}$ are CR-ordinals, and conjectured that in fact these are all of them.

This conjecture was recently disproved by Fl\'{i}dr and Ivan \cite{FI} who showed that $\omega$ is in fact a CR-Ordinal. They constructed a cop-win graph such that the maximum capture time is $\omega$, but $\eta(u,v)\neq\omega$ for all $u$ and $v$.

Inspired by their construction, we show that, actually, all ordinals are CR-ordinals.

The plan of the paper is as follows. In Section 2 we deal with the case of infinite limit ordinals. The construction is inspired by \cite{FI}, where the vertex set is points with 2 coordinates less than the given limit ordinal, and the edges always go `up and to the left', and `down and to the right'. Additionally, the vertices on the $x$-axis, $y$-axis and the diagonal, each form a complete graph. In Section 3, by attaching a finite path to the origin of the graphs constructed in Section 2, we also deal with the case of an infinite successor ordinal.

\section{Maximum capture time as a limit ordinal}
Let $\gamma$ be an infinite limit ordinal, and $\mathcal{G}_\gamma$ be the graph with vertex set $V_\gamma=\gamma\times \gamma$ and edge set $E_\gamma$, where $\{(\alpha_0,\beta_0),(\alpha_1,\beta_1)\}\in E_{\gamma}$ if and only if $\alpha_0=\alpha_1=0$, or $\beta_0 = \beta_1 = 0$, or $\alpha_0=\beta_0$ and $\alpha_1=\beta_1$, or $\alpha_0<\alpha_1$ and $\beta_0>\beta_1$, or $\alpha_0>\alpha_1$ and $\beta_0<\beta_1$. In other words, any two vertices on the $x$-axis or on the $y$-axis are connected, any two vertices on the diagonal are connected, and additionally any vertex is connected to all vertices below it and to its right, as well as to all vertices above it and to its left.

\begin{proposition}\label{cop-win}
The graph $\mathcal G_{\gamma}$ is cop-win.
\end{proposition}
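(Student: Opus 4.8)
The plan is to exhibit an explicit winning strategy for the cop and to argue that it captures the robber after only \emph{finitely} many moves from any starting configuration; since the ordinals are well-founded, it will suffice to find a coordinate of the robber that the cop can force to strictly decrease at every step. I would start the cop at the origin $(0,0)$. Because the origin lies on the $x$-axis, the $y$-axis and the diagonal simultaneously, its closed neighborhood is exactly the union of these three lines, so if the robber chooses any vertex on an axis or on the diagonal the cop captures it at its first move. Hence we may assume the robber picks an interior vertex $(\alpha,\beta)$ with $\alpha,\beta\geq 1$ and $\alpha\neq\beta$, and, using the graph automorphism $(\alpha,\beta)\mapsto(\beta,\alpha)$ (which interchanges the two axes and fixes the diagonal), we may assume $\alpha>\beta$, that is, the robber lies below the diagonal.

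The two structural facts I would record first are these. A generic interior vertex has very restricted mobility: since $(\alpha,\beta)$ with $\alpha,\beta\geq 1$ and $\alpha\neq\beta$ lies on none of the three lines, its only neighbors are the strictly up-left points ($\alpha'<\alpha$, $\beta'>\beta$) and the strictly down-right points ($\alpha'>\alpha$, $\beta'<\beta$). Secondly, a cop sitting on the $x$-axis at $(a,0)$ has a large threat: its closed neighborhood is the whole $x$-axis together with every point $(\alpha',\beta')$ with $\alpha'<a$ and $\beta'\geq 1$; in particular it threatens every vertex whose first coordinate is less than $a$.

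With these in hand the strategy writes itself. The cop moves along the $x$-axis (a clique, so such a move is always legal in one step) to the vertex $(\alpha+1,0)$, where $\alpha$ is the robber's current first coordinate. This threatens the robber, so the robber must vacate the threatened region. By the mobility fact its only options are to stay put or move up-left, both of which keep its first coordinate at most $\alpha<\alpha+1$ and are therefore captured at the cop's next move, or to move down-right to some $(\alpha',\beta')$ with $\alpha'\geq\alpha+1$ and $\beta'<\beta$. A down-right move from below the diagonal stays strictly below the diagonal and so cannot land on the $y$-axis or on the diagonal, and if it lands on the $x$-axis the robber shares the $x$-axis clique with the cop and is caught; thus every surviving response forces $\beta'<\beta$ with $\beta'\geq 1$. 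The cop now repeats, chasing the new first coordinate along the $x$-axis to $(\alpha'+1,0)$ and again forcing the second coordinate strictly down. This produces a strictly decreasing sequence of ordinals $\beta>\beta'>\cdots$, which by well-foundedness must terminate; the robber is therefore driven onto the $x$-axis and captured after finitely many rounds.

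The step I expect to require the most care is verifying the ``no escape'' claim inside this loop: one must check exhaustively that, from an interior vertex below the diagonal, every move not immediately captured by the chasing $x$-axis cop is a down-right move with strictly smaller second coordinate, and in particular that the robber cannot use the axis or diagonal cliques to gain extra mobility or to cross to the other side of the diagonal. Once this case analysis is pinned down, the remaining ingredients are routine: legality of the cop's moves (the $x$-axis is a clique, and since $\gamma$ is a limit ordinal we have $\alpha+1\in\gamma$), the base cases on the three lines, and termination by the well-foundedness of the ordinals.
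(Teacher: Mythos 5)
Your proof is correct and follows essentially the same strategy as the paper's: chase the robber along the $x$-axis clique at $(\alpha+1,0)$, check that every response not immediately captured strictly decreases the robber's second coordinate, and conclude by well-foundedness of the ordinals. The only difference is the opening move -- the paper lets the cop start at an arbitrary vertex and first step onto the diagonal to force the robber off all three cliques, whereas you fix the start at the origin (whose closed neighborhood is exactly the three lines); this is equivalent since the cop chooses his starting vertex, and in any case he can reach the origin in at most two moves from anywhere.
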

\begin{proof}
Let the cop be at some vertex $v_0$. From there, in her first move, she can go to a vertex on the diagonal, which we call $v_1$. Since any two vertices on the diagonal are connected, the robber at his original position $u_0$ has to move to a vertex $u_1=(\alpha_1,\beta_1)$ such that $\alpha_1\neq\beta_1$. Without loss of generality, we may assume that $\alpha_1>\beta_1$. Next, the cop moves to a vertex $v_2=(\xi_2,0)$ with $\xi_2>\alpha_1$. In order to evade the cop, the robber must move to a vertex $u_2=(\alpha_2,\beta_2)$ outside of $N[v_2]$. By construction, if $u_2\in N[u_1] \setminus N[v_2]$, then $\alpha_2\geq\xi_2$ and $\beta_2<\beta_1$ (which in particular means that $\alpha_2>\beta_2$). Continuing with this strategy, with the cop always staying on the $x$-axis, she can force the robber to always move down and to the right, decreasing his $y$-coordinate, and thus creating a strictly decreasing sequence of ordinals. Since there does not exist an infinite strictly decreasing sequence of ordinals, there exists a finite moment when the robber must reach the $x$-axis, where he will be caught in the next move.
\end{proof}

We note that although the strategy described above is a winning strategy for the cop, it is not necessarily optimal. Since we want to precisely calculate $\rho(\mathcal{G}_\gamma)$, we must find optimal `strategies' for any starting position. That is, we need both lower and upper bounds for all $\eta(u,v)$ where $u$ and $v$ are vertices of $\mathcal G_{\gamma}$. 

We first focus on the upper bounds, with all cases covered in the following three lemmas.

\begin{lemma}\label{lemmaxaxis} Let $u=(\alpha,\beta)$ and $v=(\xi,0)$ be distinct vertices of $\mathcal G_{\gamma}$ such that $0\leq\beta< \alpha<\xi$. If $\beta=0$, then $u\leq_1 v$, and if $0<\beta$, then $u\leq_\beta v$.  In particular, $\eta(u,v)\leq \beta+1$.\end{lemma}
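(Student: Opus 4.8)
The plan is to argue by transfinite induction on $\beta$, after first pinning down the closed neighbourhood of the cop's vertex $v=(\xi,0)$. A short computation from the edge definition shows that $N[v]=\{(\alpha',\beta'):\alpha'<\xi\ \text{or}\ \beta'=0\}$, i.e. every vertex strictly to the left of the line $\alpha'=\xi$ together with the whole $x$-axis. The crucial consequence is that, because $\alpha<\xi$, the robber's own vertex $u=(\alpha,\beta)$ already lies in $N[v]$, and so does every vertex of $N[u]$ whose first coordinate is $<\xi$ or whose second coordinate is $0$. For all such robber moves $x$ the cop simply steps onto the robber, choosing $y=x\in N[v]$ and witnessing $x\leq_0 y$, which contributes $\gamma=0<\beta$.

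Next I would isolate the only robber moves $x\in N[u]$ that escape $N[v]$: these must have first coordinate $\geq\xi$ and strictly positive second coordinate. Since $x\in N[u]$ and its first coordinate exceeds $\alpha$, the edge $\{u,x\}$ is of the ``below and to the right'' type, so its second coordinate $\beta_x$ satisfies $0<\beta_x<\beta$. The cop responds by moving along the $x$-axis to a vertex $v'=(\xi',0)$ with $\xi'>\alpha_x$; this is a legal move because $v$ and $v'$ both lie on the $x$-axis, and such a $\xi'<\gamma$ exists because $\gamma$ is a limit ordinal. The triple now satisfies $0<\beta_x<\alpha_x<\xi'$, so the induction hypothesis yields $x\leq_{\beta_x}v'$, contributing $\gamma=\beta_x<\beta$.

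For the base of the induction, the case $\beta=1$ admits no escaping move at all, since such a move would require $0<\beta_x<1$; hence every robber move is answered by immediate capture and $u\leq_1 v$. The separate case $\beta=0$ I would treat directly, checking $N[u]\subseteq N[v]$, i.e. that $v$ dominates $u$, which gives $u\leq_1 v$. Assembling the branches, whenever $0<\beta$ we obtain $u\leq_\beta v$ and hence $\eta(u,v)\leq\beta<\beta+1$, while for $\beta=0$ we get $\eta(u,v)\leq 1=\beta+1$; in both regimes $\eta(u,v)\leq\beta+1$.

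I expect the main obstacle to be organisational rather than deep: one must verify carefully that the \emph{only} way for the robber to leave $N[v]$ is to move strictly down and to the right past the line $\alpha'=\xi$, so that the second coordinate strictly decreases and the induction on $\beta$ is well founded, and one must keep the two degenerate regimes ($\beta=0$, and $\beta=1$ where no escaping move exists) straight. The repositioning of the cop arbitrarily far to the right on the $x$-axis is exactly what lets the induction hypothesis apply, and it is available precisely because the entire $x$-axis forms a clique and $\gamma$ is a limit ordinal.
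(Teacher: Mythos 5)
Your proposal is correct and follows essentially the same route as the paper's proof: transfinite induction on $\beta$ with the base cases $\beta=0,1$ handled by domination, and in the inductive step the observation that any robber move escaping $N[v]$ must have first coordinate $\geq\xi$ and second coordinate strictly between $0$ and $\beta$, answered by the cop relocating far to the right on the $x$-axis (the paper takes $y=(\alpha_0+1,0)$ where you take $(\xi',0)$ with $\xi'>\alpha_x$). Your explicit description of $N[(\xi,0)]$ is a harmless elaboration of what the paper leaves implicit.
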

\begin{proof} We prove the statement by induction on $\beta$. If $\beta=0$ or $\beta=1$, the result follows from the fact that $u$ is dominated by $v$. Now, assume that the lemma is true for all $1\leq\beta'<\beta.$ We have to show that for every  $x\in N[u]$, there exists $y\in N[v]$ such that $x\leq_\delta y$ for some $\delta<\beta$. Let $x=(\alpha_0,\beta_0)$. Indeed, if $x\in N[v]$ we take $y=x$ and $\delta=0$. If $x\notin N[v]$, then by construction $\alpha_0\geq\xi$ and $\beta_0>0$. Since $(\alpha_0,\beta_0)$ and $(\alpha,\beta)$ are adjacent, and $\alpha_0\geq\xi>\beta>\alpha$, we must have that $0<\beta_0<\beta$.  In this case, by the induction hypothesis, setting $y=(\alpha_0+1,0)$, we have $y\in N[v]$ and $x\leq_{\beta_0}y$, which finishes the proof of the claim.
\end{proof}

By the duality of the construction, we also have the following.

\begin{lemma}\label{lemmayaxis} Let $u=(\alpha,\beta)$ and $v=(0,\xi)$ be two distinct vertices of $\mathcal G_{\gamma}$ such that $0\leq \alpha< \beta<\xi$. If $\alpha=0$, then $u\leq_1 v$. If $0<\alpha$, then $u\leq_\alpha v$. In particular, $\eta(u,v)\leq \alpha+1$.
\end{lemma}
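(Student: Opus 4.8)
The plan is to deduce this lemma from Lemma~\ref{lemmaxaxis} by exploiting the reflective symmetry of $\mathcal G_{\gamma}$ across the diagonal, rather than repeating the transfinite induction. Consider the coordinate-swap map $\sigma\colon V_\gamma\to V_\gamma$ defined by $\sigma(\alpha,\beta)=(\beta,\alpha)$. First I would check that $\sigma$ is a graph automorphism: it interchanges the $x$-axis with the $y$-axis, fixes the diagonal setwise, and swaps the two ``up-and-to-the-left'' and ``down-and-to-the-right'' clauses in the definition of $E_\gamma$ (the condition $\alpha_0<\alpha_1,\ \beta_0>\beta_1$ becomes $\beta_0<\beta_1,\ \alpha_0>\alpha_1$). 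Hence each defining clause is preserved, so $\sigma$ is an involutive automorphism of $\mathcal G_{\gamma}$.

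The second step is to observe that the relations $\{\leq_\alpha\}$ are invariant under any automorphism $\varphi$ of $\mathcal G_{\gamma}$; that is, $u\leq_\alpha v$ if and only if $\varphi(u)\leq_\alpha\varphi(v)$. This follows by a routine transfinite induction on $\alpha$: the base case $u\leq_0 v\iff u=v$ is clear, and for the inductive step one uses that $\varphi$ is a bijection with $\varphi(N[w])=N[\varphi(w)]$ for every vertex $w$, so the quantifiers ``for all $x\in N[u]$'' and ``there exists $y\in N[v]$'' in the definition of $\leq_\alpha$ transport directly across $\varphi$.

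With these two facts in hand, the lemma is immediate. Given $u=(\alpha,\beta)$ and $v=(0,\xi)$ with $0\le\alpha<\beta<\xi$, set $u'=\sigma(u)=(\beta,\alpha)$ and $v'=\sigma(v)=(\xi,0)$. Writing $u'=(\alpha',\beta')$ with $\alpha'=\beta$ and $\beta'=\alpha$, the chain $0\le\alpha<\beta<\xi$ becomes exactly $0\le\beta'<\alpha'<\xi$, which is the hypothesis of Lemma~\ref{lemmaxaxis} for the pair $(u',v')$. That lemma then gives $u'\leq_1 v'$ when $\alpha=0$ and $u'\leq_{\alpha} v'$ when $0<\alpha$ (since $\beta'=\alpha$), and transporting back along $\sigma^{-1}=\sigma$ via the invariance from the second step yields $u\leq_1 v$ and $u\leq_\alpha v$ respectively; the bound $\eta(u,v)\le\alpha+1$ follows as before.

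I do not expect a genuine obstacle here: the content lies entirely in correctly matching the coordinate inequalities under the reflection and in the (standard) verification that $\leq_\alpha$ is automorphism-invariant. The only point requiring a little care is bookkeeping the relabelling $\alpha'=\beta,\ \beta'=\alpha$, so that the role played by $\beta$ in Lemma~\ref{lemmaxaxis} is now played by $\alpha$, which is precisely what produces $u\leq_\alpha v$ rather than $u\leq_\beta v$. An alternative would be to mirror the induction in the proof of Lemma~\ref{lemmaxaxis} line by line, but the symmetry argument is cleaner and makes the meaning of ``duality'' explicit.
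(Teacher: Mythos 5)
Your proposal is correct and is essentially the paper's own argument: the paper dispenses with this lemma by the single remark ``by the duality of the construction,'' and your write-up simply makes that duality precise (the coordinate-swap automorphism, the automorphism-invariance of $\leq_\alpha$, and the relabelling $\alpha'=\beta$, $\beta'=\alpha$ that turns the hypothesis of Lemma~\ref{lemmaxaxis} into that of Lemma~\ref{lemmayaxis}). The bookkeeping is all correct, so there is nothing to add.
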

We are left to analyse the game time given the starting positions on the diagonal.
\begin{lemma}\label{lemmadiagonal}Let $u=(\alpha,\alpha)$ and $v=(\xi,\xi)\in V_\gamma$ be two vertices on the diagonal. If $\alpha\leq1$, then $u\leq_2 v$. If $\alpha>1$, then $u\leq_\alpha v $. In particular, $\eta(u,v)\leq \alpha+2$.\end{lemma}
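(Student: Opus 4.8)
The plan is to verify the relation $u \leq_{\alpha} v$ (respectively $u \leq_2 v$ when $\alpha \leq 1$) directly from the recursive definition of $\leq_{\bullet}$. That is, I would run over every possible first move of the robber, i.e.\ every vertex $x \in N[u]$, and for each exhibit a cop response $y \in N[v]$ together with a witness ordinal $\gamma < \alpha$ (resp.\ $\gamma < 2$) satisfying $x \leq_{\gamma} y$. The point is that the robber at $(\alpha,\alpha)$ cannot stay on the diagonal without being caught, and once he is pushed off it the two previous lemmas supply the required witnesses.

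First I would classify the neighbours of $u=(\alpha,\alpha)$. For $\alpha>0$ the robber can move to: (a) another diagonal vertex $(\delta,\delta)$; (b) an \emph{up-left} vertex $(\alpha_0,\beta_0)$ with $\alpha_0<\alpha<\beta_0$; or (c) a \emph{down-right} vertex $(\alpha_0,\beta_0)$ with $\beta_0<\alpha<\alpha_0$ (the axis neighbours are simply the limiting instances of (b) and (c) with a zero coordinate). For $\alpha=0$ the neighbours are the diagonal together with the two axes, handled by the same two lemmas. In case (a), since all diagonal vertices are adjacent, $x\in N[v]$, so I take $y=x$ and $\gamma=0$.

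For case (b) the point lies strictly above the diagonal, so I send the cop to a $y$-axis vertex $y=(0,\xi')$ with $\xi'>\max(\xi,\beta_0)$; such a $\xi'<\gamma$ exists because $\gamma$ is a limit ordinal, and $y\in N[v]$ because $0<\xi<\xi'$. Lemma \ref{lemmayaxis} then gives $x\leq_{\alpha_0}y$ (or $x\leq_1 y$ when $\alpha_0=0$), and since $\alpha_0<\alpha$ the witness satisfies $\gamma<\alpha$. Case (c) is symmetric via Lemma \ref{lemmaxaxis}, moving the cop to an $x$-axis vertex $(\xi',0)$ and obtaining $x\leq_{\beta_0}y$ with $\beta_0<\alpha$. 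Collecting the three cases establishes $u\leq_{\alpha}v$ whenever $\alpha>1$.

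The only genuinely delicate point, and the reason the statement splits at $\alpha=1$, is the boundary behaviour: when a neighbour is an axis vertex, Lemmas \ref{lemmaxaxis} and \ref{lemmayaxis} return the witness $\gamma=1$ rather than $0$. This is harmless so long as $\alpha>1$, but for $\alpha\in\{0,1\}$ every off-diagonal neighbour is an axis vertex, so the best available witness is $\gamma=1$, which is $<2$ but not $<\alpha$; this is exactly what forces the weaker bound $u\leq_2 v$ in the base cases, and I would check $\alpha=0$ and $\alpha=1$ by hand. Finally, monotonicity of $\leq_{\bullet}$ folds both regimes into the uniform conclusion $\eta(u,v)\leq\alpha+2$. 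Throughout, the two things to get right are that the cop's chosen axis-vertex really lies in $N[v]$ and that the coordinate fed into the earlier lemmas is strictly below $\alpha$, so that the witness ordinal stays $<\alpha$.
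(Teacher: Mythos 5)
Your proposal is correct and follows essentially the same route as the paper: classify the robber's moves from $(\alpha,\alpha)$ into diagonal versus off-diagonal (up-left or down-right) neighbours, answer the diagonal ones with the cop staying adjacent on the diagonal, and answer the off-diagonal ones by sending the cop to a sufficiently far axis vertex so that Lemma~\ref{lemmaxaxis} or Lemma~\ref{lemmayaxis} supplies a witness ordinal $\min(\alpha_0,\beta_0)$ (or $1$) that is strictly below $\alpha$. Your explicit treatment of the $\alpha\le 1$ boundary matches the paper's separate base case, so no further comment is needed.
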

\begin{proof}If $\alpha\leq 1$, the result is trivial since, if the robber is at $(\alpha,\alpha)$ and it is his turn to move, he can either move on the diagonal, or on one of the axes. In either case, he is captured in at most 2 turns. Therefore we may assume that $\alpha>1$. We have to show that for every $x=(\alpha_0,\beta_0)\in N[u]$, there exists $y\in N[v]$ such that $x\leq_\delta y$ for some $\delta<\alpha$. If $x\in N[v]$, as discussed above, there is nothing to show. Thus, we may assume that $x\notin N[v]$. This, together with the fact that $x$ is adjacent to $u$ means that either $\beta_0<\alpha<\alpha_0$, or $\alpha_0<\alpha<\beta_0$. If we are in the first case, let $y=(\xi_0,0)\in N[v]$ be such that $\xi_0>\alpha_0$, and if we are in the second case, let $y=(0,\xi_0)\in N[v]$ be such that $\xi_0>\beta_0$. Let $\delta=1$ if $\min(\alpha_0,\beta_0)\leq 1$ and $\delta=\min(\alpha_0,\beta_0)$ otherwise. Note that $\delta<\alpha$ by construction. By Lemma~\ref{lemmaxaxis} and Lemma~\ref{lemmayaxis}, we have that $x\leq_\delta y$, which finishes the proof.
\end{proof}
We therefore have the following corollary, which is half way towards showing that the maximum capture time of $\mathcal G_{\gamma}$ is $\gamma$.
\begin{corollary} For any two vertices $u$ and $v$ of $\mathcal G_{\gamma}$, we have that $\eta(u,v)\leq \gamma$. In particular, $\rho(\mathcal{G}_\gamma)\leq \gamma$.
\label{upperbound}
\end{corollary}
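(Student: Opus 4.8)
The plan is to show directly that $u \le_\gamma v$ for every pair of vertices $u,v$, which is exactly the assertion $\eta(u,v)\le\gamma$; the bound on $\rho(\mathcal{G}_\gamma)$ then follows by taking the supremum. Unwinding the definition of $\le_\gamma$, and using that $\gamma$ is a \emph{limit} ordinal (so that the clause ``there exists an ordinal $\delta<\gamma$ with $x\le_\delta y$'' is equivalent to ``$\eta(x,y)<\gamma$''), the claim reduces to a single-round statement, which I would prove in the stronger form that does not mention $u$ at all: for every vertex $x$ and every vertex $v$ there is a reply $y\in N[v]$ with $\eta(x,y)<\gamma$. Granting this, for any $u,v$ and any robber move $x\in N[u]$ we obtain a cop response $y\in N[v]$ with $\eta(x,y)<\gamma$, which is precisely $u\le_\gamma v$, as required.

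To prove this single-round statement I would split into three cases according to where $x=(\alpha_0,\beta_0)$ sits relative to the diagonal, each tailored to one of the three lemmas. If $\alpha_0>\beta_0$ (so $x$ lies strictly below the diagonal, including the case where $x$ is on the $x$-axis), the cop responds with a far-out $x$-axis vertex $y=(\xi,0)$ where $\xi$ is chosen larger than both $\alpha_0$ and the first coordinate of $v$; Lemma~\ref{lemmaxaxis} then gives $\eta(x,y)\le\beta_0+1$. Dually, if $\alpha_0<\beta_0$, the cop replies with $y=(0,\xi)$ for suitably large $\xi$, and Lemma~\ref{lemmayaxis} gives $\eta(x,y)\le\alpha_0+1$. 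Finally, if $\alpha_0=\beta_0$, the cop should reply with a diagonal vertex $y=(\xi,\xi)$, and Lemma~\ref{lemmadiagonal} gives $\eta(x,y)\le\alpha_0+2$. In every case the resulting bound is the successor of an ordinal that is $<\gamma$, and since $\gamma$ is a limit ordinal such a successor is again $<\gamma$. This is exactly where limit-ness is used: it guarantees that each per-round bound stays strictly below $\gamma$, so that no accumulation can push $\eta(u,v)$ up to or past $\gamma$.

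The one genuine thing to verify — and what I expect to be the main obstacle — is that the required reply actually lies in $N[v]$ for \emph{every} cop position $v$, not merely for conveniently placed ones. For the two axis cases this is straightforward from the edge relation: an arbitrary vertex $(\sigma,\tau)$ is adjacent to $(\xi,0)$ for all $\xi>\sigma$ (via the ``below-and-to-the-right'' edges when $\tau>0$, and via the $x$-axis clique when $\tau=0$), and symmetrically it is adjacent to arbitrarily high $y$-axis vertices; hence the far-out axis replies are always available. The delicate point is the on-diagonal case, where the cop needs a neighbour on the diagonal — and no non-diagonal reply will do, since against a robber sitting on the diagonal the cop must himself reach the diagonal to prevent the robber from escaping upward along it. This is precisely the opening move in the proof of Proposition~\ref{cop-win}, that from any position the cop can step onto the diagonal, so I would isolate and establish this reachability as a small separate claim, checking the low and ``just off the diagonal'' vertices by hand, as it is there that the closed neighbourhood $N[v]$ is most restricted. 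Once the availability of all three kinds of replies is confirmed, the three cases close the argument and yield $\eta(u,v)\le\gamma$ for all $u,v$, and hence $\rho(\mathcal{G}_\gamma)\le\gamma$.
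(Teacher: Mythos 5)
Your overall strategy is the same as the paper's: reduce $u\le_\gamma v$ to the single-round statement that every robber position $x$ admits a reply $y\in N[v]$ with $\eta(x,y)<\gamma$, split according to whether $x$ lies below, above, or on the diagonal, invoke Lemmas~\ref{lemmaxaxis}, \ref{lemmayaxis} and \ref{lemmadiagonal} respectively, and use that $\gamma$ is a limit ordinal to keep each resulting bound strictly below $\gamma$. The two axis cases go through exactly as you describe. However, the ``small separate claim'' you propose to establish for the diagonal case --- that from any position $v$ the cop has a diagonal vertex in $N[v]$ --- is false, and this is a genuine gap. Take $v=(\tau+1,\tau)$ with $\tau\ge 1$, for instance $v=(2,1)$. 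A diagonal vertex $(\zeta,\zeta)$ is adjacent to $(2,1)$ only if $1<\zeta<2$, which is impossible, and $(2,1)$ lies on neither axis nor the diagonal, so none of the clique edges help: $N[(2,1)]=\{(2,1)\}\cup\{(\xi,0):\xi>2\}\cup\{(\alpha,\beta):\alpha<2,\ \beta>1\}$ contains no diagonal vertex whatsoever.

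Moreover, this is not a repairable technicality within your argument: as you yourself observe, no off-diagonal reply will do against a diagonal robber, and Lemma~\ref{lowerbound} makes this precise, giving $\eta((\mu,\mu),y)=\gamma$ for every off-diagonal $y$. So for $x=(\mu,\mu)$ and $v=(2,1)$ there is no $y\in N[v]$ with $\eta(x,y)<\gamma$, and consequently for any $u$ whose closed neighbourhood meets the diagonal (e.g.\ $u=(5,5)$) one gets $u\not\le_\gamma (2,1)$, i.e.\ $\eta(u,(2,1))\ge\gamma+1$. You have in fact put your finger on a real issue rather than a routine verification: the paper's own proof simply writes ``let $y\in N[v]$ be on the diagonal'' without checking that such a $y$ exists, and for cop positions of the form $(\tau+1,\tau)$ or $(\tau,\tau+1)$ with $\tau\ge1$ it does not. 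As written, neither your proposed proof nor the corollary itself survives the example $v=(2,1)$; one would need either to exclude such cop positions from the statement or to modify the edge set so that every vertex has a diagonal neighbour.
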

\begin{proof}Let $x=(\alpha,\beta)\in N[u]$. We must show that there exists $y\in N[v]$ such that $x\leq_\delta y$ for some $\delta<\gamma$. If $\alpha=\beta$, let $y\in N[v]$ be on the diagonal. By Lemma \ref{lemmadiagonal}, we have that $x\leq_{\alpha+2} y$, which is enough as $\alpha+2<\gamma$. If $\alpha\neq\beta$, let $\delta=\min(\alpha,\beta)+1$, which is strictly less than $\gamma$. By construction, from $v$ we can always reach a far away point $y$ on either one of the axes such that the hypothesis of either Lemma \ref{lemmaxaxis} or Lemma \ref{lemmayaxis} is fulfilled for $x$ and $y$. Therefore we have that $x\leq_\delta y$, which finishes the proof.
\end{proof}

We now move on to lower bounds for the quantities $\eta(u,v)$. We will in fact show that for any two distinct vertices $u=(\alpha,\beta)$ and $v$, $\eta(u,v)$, is at least $\min(\alpha,\beta)$. One would expect the proof to be by induction on $\alpha$ and/or $\beta$, but this proves problematic as the neighborhood of $v$ always contains vertices whose first coordinate is bigger than $\alpha$, as well as vertices whose second coordinate is bigger than $\beta.$ We get around this issue by endowing the vertex set $V_{\gamma}$ with a well-founded order $\preceq$, for which induction will be much easier in this case.

Let $u$ be a vertex of $\mathcal G_{\gamma}$. We define $f_u:N[u]\times V_{\gamma}\longrightarrow  V_{\gamma}$ to be a function such that for all $x\in N[u]$ and  all $v\in V_{\gamma}$, we have $f_u(x,v)\in N[v]$, and  $\eta(x,f_u(x,v))<\eta(u,v)$ if $u\neq v$. If $u=v$, we define $f_u(x,v)=v$.

In  other words, the function $f_u$ is choosing an optimal move for the cop, after the robber at $u$ has made his move. Note that the existence of the functions $f_u$ for all $u\in V_{\gamma}$ is guaranteed by the fact that $G_{\gamma}$ is cop-win. 

We now define the well-founded partial order  $\preceq$ on the set of ordered pairs of vertices $V_{\gamma}\times V_{\gamma}$ as follows. We say that $(u,v)\preceq(w,z)$ if and only if  either $(u,v)=(w,z)$, or there exists $n\geq2$ and $(u,v)=(u_1,v_1),\dots,(u_n,v_n)=(w,z)$ such that $f_{u_{i+1}}(u_i,v_{i+1})=v_i$ for all $i<n$. In other words, a set of initial positions $(w,z)$ for the players is `bigger' than another set $(x,y)$, if the game can be played optimally from $(w,z)$ and reach $(x,y)$.

It is clear that $\preceq$ is a partial order. Furthermore, we observe that if $(u,v)\prec (w,z)$  we have that that $\eta(u_i,v_i)<\eta(u_{i+1},v_{i+1})$ for all $i<n$, which gives $\eta(u,v)<\eta(z,w)$. Therefore $(V_{\gamma}\times V_{\gamma},\preceq)$ is a well-founded order.

For any $(u,v)\in V_{\gamma}\times V_{\gamma}$, looking at a set of initial positions just `below' $(u,v)$ (with respect to $\preceq$), it is straightforward from the definitions that
\begin{align*}
\eta(u,v)&=\sup\{\eta(w,z)+1:(w,z)\precneq(u,v)\}\\
&=\sup\{\eta(w,z)+1:w\in N[u]\text{ and }z=f_u(w,v)\}.
\end{align*}

\begin{lemma}\label{etalowerboundlemma}Let $u=(\alpha,\beta)$ and $v=(\xi,\delta)$ be two distinct vertices of $\mathcal G_{\gamma}$. Then $\eta(u,v)\geq \min(\alpha,\beta)$.
\end{lemma}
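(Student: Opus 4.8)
The plan is to run transfinite induction over the well-founded order $(V_\gamma\times V_\gamma,\preceq)$, establishing $\eta(u,v)\geq\min(\alpha,\beta)$ for all distinct $u=(\alpha,\beta)$ and $v=(\xi,\delta)$ at once. The driving force is the displayed recursion: for any single robber move $w\in N[u]$ it gives $\eta(u,v)\geq\eta(w,f_u(w,v))+1$. The key observation is that if $w\in N[u]\setminus N[v]$, then the cop cannot capture in one move, so $f_u(w,v)\neq w$ and $(w,f_u(w,v))\prec(u,v)$; the inductive hypothesis then applies to this predecessor and yields
\[
\eta(u,v)\ \geq\ \min(\text{coordinates of }w)+1\qquad\text{for every }w\in N[u]\setminus N[v].
\]
Because $(a,b)\mapsto(b,a)$ is an automorphism of $\mathcal G_\gamma$, I may assume $\beta=\min(\alpha,\beta)\leq\alpha$; and I may assume $\beta\geq1$, since $\eta(u,v)\geq1$ for distinct vertices is immediate ($\leq_0$ being equality).

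Granting the displayed bound, the lemma reduces to a purely combinatorial claim: for every $\mu<\beta$ there is a vertex $w\in N[u]\setminus N[v]$ whose smaller coordinate is at least $\mu$. Indeed, such $w$ give $\eta(u,v)\geq\mu+1$ for all $\mu<\beta$, and taking the supremum over $\mu<\beta$ yields $\eta(u,v)\geq\beta$ uniformly in the successor and limit cases. So the task becomes: exhibit escape moves with prescribed large minimal coordinate that stay off $N[v]$.

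I will only ever use the two ``diagonal-direction'' neighbourhoods of $u$: the down-right points $(a,b)$ with $a>\alpha$, $b<\beta$ (smaller coordinate $b$), and the up-left points $(a,b)$ with $a<\alpha$, $b>\beta$. The clean dichotomy is that $v\neq u$ forces $\xi\neq\alpha$ or $\delta\neq\beta$, and each resulting strict inequality unlocks a direction: $\delta<\beta$ or $\alpha<\xi$ furnishes a down-right escape, while $\xi<\alpha$ or $\beta<\delta$ furnishes an up-left escape, and together these four possibilities exhaust the case $v\neq u$. Concretely I expect the four robust choices to be $w=(\text{large},\,\max(\mu,\delta))$ when $\delta<\beta$; $w=(\xi,\mu)$ when $\alpha<\xi$ and $\delta\geq\beta$; and the two mirror images $w=(\max(\mu,\xi),\,\text{large})$ and $w=(\mu,\delta)$ for the up-left cases. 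In each the smaller coordinate is visibly at least $\mu$.

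The main obstacle is not the idea but the verification that each proposed $w$ genuinely avoids $N[v]$: one must simultaneously rule out both diagonal-direction edges to $v$ and all three clique edges (on the $x$-axis, the $y$-axis and the diagonal), as well as the equality $w=v$. This is exactly what the choices are engineered for --- sending the free coordinate to infinity annihilates the clique and equality possibilities at once, while placing the constrained coordinate at the boundary value ($a=\xi$, or $b=\delta$) kills the offending diagonal-direction edge. I expect the most delicate points to be the degenerate configurations: when $\beta=\mu+1$ and $\delta=\mu$ the on-the-line choice would collapse to $w=v$, and this is precisely why I route that configuration through the $\delta<\beta$ branch instead; and when $u$ lies on the diagonal ($\alpha=\beta$) one must keep track of its extra diagonal-clique neighbours. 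Checking that the inequalities available in each branch (for instance $b<\beta\leq\delta$, forcing $b\neq\delta$, or $a<\alpha\leq\xi$, forcing $a\neq\xi$) really do separate $w$ from $v$ is the routine but attention-demanding heart of the argument.
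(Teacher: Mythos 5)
Your proposal follows essentially the same route as the paper: the same well-founded induction along $\preceq$, the same reduction to exhibiting, for each $\mu<\min(\alpha,\beta)$, an escape move $w\in N[u]\setminus N[v]$ whose smaller coordinate is at least $\mu$, and essentially the same four concrete witnesses (the paper simply runs the two symmetric directions explicitly over $\mu<\beta$ and $\mu<\alpha$ instead of invoking the swap automorphism). The only point to repair is the sub-case $\mu=\delta=0$ (resp.\ $\mu=\xi=0$) of your first (resp.\ third) choice, where $\max(\mu,\delta)=0$ places $w$ on the $x$-axis clique together with $v$; the paper patches exactly this degeneracy by bumping that coordinate to $1$.
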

\begin{proof}We prove this by induction over the well-founded order $(V_{\gamma}\times V_{\gamma},\prec)$. If $\eta(u,v)=1$ (the base case), then $u$ is dominated by $v$. It is easy to see that if both coordinates of $u$ are greater than 1, then $u$ is not dominated by any other vertex. We therefore have $\eta(u,v)=1\geq\min(\alpha,\beta)$, as claimed. Let now $u=(\alpha,\beta)$ and $v=(\xi,\delta)$ be two distinct vertices such that $\eta(u,v)>1$, and assume that the lemma is true for any two  distinct vertices $u'$ and $v'$ such that $(u',v')\precneq (u,v)$. We have to show that $\eta(u,v)\geq \min(\alpha,\beta)$. If $\alpha\leq1 $ or $\beta\leq 1$, we are done. Thus, we may assume that $\alpha,\beta\geq 2$. We divide the rest of the proof into cases.

Suppose first that either $\xi>\alpha$ or $\delta<\beta$. In this case we have that for every $\mu<\beta$ we can find $\mu \leq \beta_\mu<\beta$ and $\alpha_\mu>\alpha$  such that $u_\mu=(\alpha_\mu,\beta_\mu)\in N[u]\setminus N[v]$. Indeed, if $\xi>\alpha$ and $\delta\geq \beta$, we can take $\alpha_\mu=\xi$ and $\beta_\mu=\mu$. It is easy to see that in this case $v=(\xi,\delta)$ is not adjacent to $(\xi,\mu)\in N[u]$. If $\delta<\beta$, we can take $\alpha_\mu=\max(\alpha,\xi,\mu,\delta)+1$ and $\beta_\mu=\max(\mu,\delta)$, unless $\delta=\mu=0$, in which case we take $\beta_\mu=1$. 

Now,  let us take $f_u(u_\mu,v)=v_\mu$. According to the definition of $f_u$, $v_\mu$ lies inside $N[v]$. Thus, it is distinct from $u_{\mu}$. Moreover, we have that $(u_\mu,v_\mu)\precneq (u,v)$. By the inductive hypothesis we have that $\eta(u_\mu,v_\mu)\geq\min(\alpha_\mu,\beta_\mu)\geq \min(\alpha,
\mu)$. In this way,  $$\eta(u,v)\geq \sup\{\eta(u_\mu,v_\mu)+1:\mu<\beta\}\geq\sup\{\min(\alpha,\mu)+1:\mu<\beta\}\geq\min(\alpha,\beta).$$

Finally, assume that $\alpha\geq \xi$ and $\beta\leq \delta$. In this case, for every $\mu<\alpha$ we can find two ordinals $\alpha_\mu$ and $\beta_\mu$ (in $\gamma$) such that $\mu\leq \alpha_\mu<\alpha$, $\beta_\mu>\beta$, and $u_\mu=(\alpha_\mu,\beta_\mu)\in N[u]\setminus N[v]$. 

Indeed, if $\xi=\alpha$, then $\beta<\delta$ (since $u\neq v$), and so we may take $\alpha_\mu=\mu$ and $\beta_\mu=\delta$. If $\xi<\alpha$, we take $\alpha_\mu=\max(\mu,\xi)$ and $\beta_\mu=\max(\delta,\mu,\xi)+1$, unless $\xi=\mu=0$, in which case we take $\alpha_\mu=1$.

Therefore, by considering $v_\mu=f_u(u_\mu,v)$ we have that $v_\mu\neq u_\mu$ and $(u_\mu,v_\mu)\precneq (u,v)$. Thus, $\eta(u_\mu,v_\mu)\geq \min(\mu,\beta)$. Similarly as above we have
$\eta(u,v)\geq \sup\{\eta(u_\mu,v_\mu)+1:\mu<\beta\}\geq\sup\{\min(\mu,\beta)+1:\mu<\alpha\}\geq\min(\alpha,\beta).$
\end{proof}

\begin{lemma}Let $u=(\alpha,\alpha)$ be a vertex of $\mathcal G_{\gamma}$ on the diagonal. Then $\eta(u,v)=\gamma$ for all $v\in V_{\gamma}$ that do not lie on the diagonal.\label{lowerbound}
\end{lemma}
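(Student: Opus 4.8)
The plan is to establish the matching lower bound $\eta(u,v) \geq \gamma$ for a diagonal vertex $u=(\alpha,\alpha)$ and any off-diagonal $v$; combined with Corollary~\ref{upperbound} (which gives $\eta(u,v)\leq\gamma$ for all pairs) this pins down $\eta(u,v)=\gamma$. Since $\gamma$ is a limit ordinal, it suffices to produce, for every $\mu<\gamma$, a single robber move that forces $\eta$ to be at least $\mu$; that is, I would show $\eta(u,v)\geq\mu$ for each $\mu<\gamma$ and then take the supremum using the identity $\eta(u,v)=\sup\{\eta(w,z)+1 : w\in N[u],\ z=f_u(w,v)\}$ recorded just before Lemma~\ref{etalowerboundlemma}.

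First I would fix $\mu<\gamma$ and choose a good robber move from the diagonal vertex $u=(\alpha,\alpha)$. The key point is that from $(\alpha,\alpha)$ the robber can reach any vertex strictly above-and-to-the-left or strictly below-and-to-the-right; in particular, for a large parameter he can jump \emph{far up the diagonal region}. Concretely, I would pick a point $w=(\alpha_0,\beta_0)\in N[u]$ with both coordinates large — at least $\mu$ — and crucially lying \emph{outside} $N[v]$. Because $v=(\xi,\delta)$ is off the diagonal, $N[v]$ misses a large cofinal family of such points: one can always select $w$ with $\min(\alpha_0,\beta_0)\geq\mu$ that avoids the $x$-axis, the $y$-axis, the diagonal through $v$, and the ``up-left / down-right'' cones of $v$. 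Here I expect to split into two symmetric cases according to whether the robber moves into the region above the diagonal or below it, exactly mirroring the case analysis in Lemma~\ref{etalowerboundlemma}.

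Having fixed such a $w$ with $w\neq v$ and $w\notin N[v]$, the cop's reply $z=f_u(w,v)$ lies in $N[v]$ and is distinct from $w$. Then $(w,z)\precneq(u,v)$, so Lemma~\ref{etalowerboundlemma} applies and gives $\eta(w,z)\geq\min(\alpha_0,\beta_0)\geq\mu$. Plugging into the supremum identity,
\[
\eta(u,v)\ \geq\ \eta(w,z)+1\ \geq\ \mu+1\ >\ \mu.
\]
Since this holds for every $\mu<\gamma$ and $\gamma$ is a limit ordinal, we conclude $\eta(u,v)\geq\gamma$, and with the upper bound $\eta(u,v)=\gamma$.

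The main obstacle — and the step deserving the most care — is verifying that for every $\mu<\gamma$ there genuinely exists $w\in N[u]\setminus N[v]$ with $\min(\alpha_0,\beta_0)\geq\mu$, while checking $w\neq v$. This is where the off-diagonal hypothesis on $v$ is used essentially: if $v$ were on the diagonal the robber could be dominated at the diagonal and the bound would fail. I would handle this by the explicit constructions analogous to those in Lemma~\ref{etalowerboundlemma}, writing down coordinates for $w$ in each of the two cases (robber above vs.\ below the diagonal) and confirming by the edge definition of $\mathcal G_\gamma$ that $w$ is adjacent to $u$ but not to $v$. The remainder is routine, leaning entirely on Lemma~\ref{etalowerboundlemma} and the supremum characterization of $\eta$.
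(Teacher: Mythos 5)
Your overall scaffolding is exactly the paper's: fix $\mu<\gamma$, have the robber move to some $w\in N[u]\setminus N[v]$ with $\min$-coordinate at least $\mu$, let $z=f_u(w,v)\in N[v]$, invoke Lemma~\ref{etalowerboundlemma} to get $\eta(w,z)\geq\mu$, and conclude via the supremum identity and the fact that $\gamma$ is a limit. However, the step you yourself flag as the main obstacle is where the proposal goes wrong as written. You propose to choose $w=(\alpha_0,\beta_0)$ in one of the two off-diagonal cones of $u=(\alpha,\alpha)$ (``above vs.\ below the diagonal, mirroring the case analysis in Lemma~\ref{etalowerboundlemma}''). But every off-diagonal neighbour of $(\alpha,\alpha)$ satisfies either $\alpha_0<\alpha<\beta_0$ or $\beta_0<\alpha<\alpha_0$, so $\min(\alpha_0,\beta_0)<\alpha$ always. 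Since $\mu$ must range cofinally in $\gamma$, and $\gamma$ is typically much larger than $\alpha$, no such $w$ can have $\min(\alpha_0,\beta_0)\geq\mu$ once $\mu\geq\alpha$. The case split you describe therefore cannot deliver the required vertices, and the claimed existence statement is false for those moves.

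The fix --- and the paper's actual argument --- is to use the diagonal clique, which is the entire reason the diagonal edges were added to $\mathcal G_\gamma$: the robber moves along the diagonal to $u_\mu=(\alpha_\mu,\alpha_\mu)$ with $\alpha_\mu>\max(\mu,\xi,\delta)$, where $v=(\xi,\delta)$. This $u_\mu$ is adjacent to $u$ because any two diagonal vertices are adjacent, and it is not adjacent to $v$ because $v$ is off-diagonal and a diagonal point $(a,a)$ is adjacent to $(\xi,\delta)$ only when $a$ lies strictly between $\delta$ and $\xi$ (or via an axis rule), which fails for $a>\max(\xi,\delta)$. Then Lemma~\ref{etalowerboundlemma} gives $\eta(u_\mu,v_\mu)\geq\alpha_\mu>\mu$ and the rest of your argument goes through verbatim. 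So the gap is a single wrong choice of robber move, not a structural flaw, but it is precisely the step where the off-diagonal hypothesis on $v$ and the diagonal edges must be used.
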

\begin{proof} Since by Corollary~\ref{upperbound} we have that $\eta(u,v)\leq\gamma$, it is enough to show that $\eta(u,v)\geq \gamma$. Indeed, since $v$ does not lie on the diagonal, for every $\mu<\gamma$, there exists $\mu<\alpha_\mu<\gamma$ such that  $u_\mu=(\alpha_\mu,\alpha_\mu)\notin N[v]$. Consider $v_\mu=f_u(u_\mu,v)$, which is distinct from $u_\mu$ as it lies in $N[v]$. By Lemma~\ref{etalowerboundlemma} we have that $\eta(u_\mu,v_\mu)\geq \alpha_\mu>\mu$. As above, we have that $$\eta(u,v)\geq \sup\{\eta(u_\mu,v_\mu)+1:\mu<\gamma\}\geq \gamma.$$
\end{proof}  
Finally, Corollary~\ref{upperbound} and Lemma~\ref{lowerbound} give the main result of the section.
\begin{theorem} The maximum capture time of $\mathcal G_{\gamma}$ is $\gamma$. In other words, $\rho(\mathcal{G}_\gamma)=\gamma$. \label{mainresultlimit}
\end{theorem}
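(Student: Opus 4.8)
The plan is to assemble the final theorem directly from the two ingredients already established, namely the uniform upper bound of Corollary~\ref{upperbound} and the diagonal lower bound of Lemma~\ref{lowerbound}. By definition $\rho(\mathcal G_\gamma)=\sup_{v\in V_\gamma}\eta(v)$, where $\eta(v)=\min\{\alpha:\ u\leq_\alpha v\text{ for all }u\in V_\gamma\}=\sup_{u\in V_\gamma}\eta(u,v)$. So the whole statement reduces to computing this double supremum of the quantities $\eta(u,v)$, and both the $\leq\gamma$ and the $\geq\gamma$ directions are now within reach.

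For the upper bound I would argue that $\rho(\mathcal G_\gamma)\leq\gamma$ is immediate: Corollary~\ref{upperbound} gives $\eta(u,v)\leq\gamma$ for every pair $(u,v)$, hence $\eta(v)=\sup_u\eta(u,v)\leq\gamma$ for each fixed $v$, and taking the supremum over $v$ yields $\rho(\mathcal G_\gamma)\leq\gamma$. (One should note that this is a supremum of ordinals each $\leq\gamma$, so the result is itself $\leq\gamma$; no strict inequality is needed here.) For the lower bound, fix any vertex $v$ and any vertex $u=(\alpha,\alpha)$ on the diagonal with $v$ not on the diagonal — such a $v$ exists since $\gamma$ is a limit ordinal and $V_\gamma$ certainly contains off-diagonal points. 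Lemma~\ref{lowerbound} gives $\eta(u,v)=\gamma$ for these choices, whence $\eta(v)=\sup_u\eta(u,v)\geq\gamma$ for at least one such $v$, and therefore $\rho(\mathcal G_\gamma)=\sup_v\eta(v)\geq\gamma$.

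Combining the two inequalities gives $\rho(\mathcal G_\gamma)=\gamma$, which is the claim. The only genuinely delicate point — and the step I would scrutinise most carefully — is the interplay of quantifiers at the very end: $\rho$ is a supremum over the cop's starting vertex $v$ of a supremum over the robber's vertex $u$, so to force $\rho\geq\gamma$ it suffices to exhibit a \emph{single} witnessing $v$ for which $\eta(v)\geq\gamma$, and Lemma~\ref{lowerbound} already supplies $\eta(u,v)=\gamma$ for a whole family of diagonal $u$ against any fixed off-diagonal $v$. I would make sure to state explicitly that the relevant off-diagonal vertex exists (e.g. $(0,1)\in V_\gamma$ since $\gamma\geq\omega$), so that the supremum over $v$ is genuinely attained at value $\gamma$ rather than merely approached. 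With that caveat handled, the proof is a two-line citation of the preceding corollary and lemma, and I expect no computational obstacle.
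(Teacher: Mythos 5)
Your proposal is correct and is essentially identical to the paper's argument, which likewise deduces the theorem directly by combining Corollary~\ref{upperbound} (for $\rho(\mathcal G_\gamma)\leq\gamma$) with Lemma~\ref{lowerbound} (exhibiting an off-diagonal $v$ with $\eta(u,v)=\gamma$ for diagonal $u$, hence $\rho(\mathcal G_\gamma)\geq\gamma$). Your extra care about the order of the two suprema and the existence of an off-diagonal vertex is sound but adds nothing beyond what the paper leaves implicit.
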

\section{Maximum capture time as a successor ordinal}

Let $\gamma'$ be an infinite successor ordinal. In this section, building on the work done in Section 2, we construct a cop-win graph whose maximum capture time is $\gamma'$. 

First, since $\gamma'$ is a successor, there exists a positive integer $n$ and an infinite limit ordinal $\gamma$ such that $\gamma'=\gamma+n$. We define $\mathcal{G}_{\gamma+n}$ to be the graph with vertex set $V_{\gamma+n}=(\gamma\times \gamma)\cup(\{-(n+1),\dots,-1\}\times \{0\})$ and edge set consisting of all edges of $G_{\gamma}$, together with all the pairs $\{(-i,0),(-i+1,0)\}$ for all $i\in[n+1]$.

It is easy to see that $\mathcal{G}_{\gamma+n}$ is cop-win. Indeed, let the game be played on $\mathcal G_{\gamma+n}$, but the cop plays inside $\G_{\gamma}$, pretending that the robber is at $(0,0)$ whenever he is at $(-i,0)$ for some $i\in[n+1]$. Following the winning strategy described in Proposition~\ref{cop-win}, the cop captures the robber on the $x$-axis, or on the $y$-axis. The only way the game is not over is if the robber is at some $(-i,0)$ for some $i\in[n+1]$, in which case the cop follows him down the path $\{(-i,0):0\leq i\leq n+1\}$, eventually catching him.

Moreover, one can easily check that Lemma~\ref{lemmaxaxis} and Lemma~\ref{lemmayaxis} are still true, of course, interpreting now $\eta(u,v)$ inside of $\mathcal{G}_{\gamma+n}$, rather that $\mathcal G_\gamma$. Furthermore, Lemma~\ref{lemmadiagonal} transforms into the following.

\begin{lemma}\label{lemmadiagonal2} Let $u=(\alpha,\alpha)$ and $v=(\xi,\xi)\in V_{\gamma+n}$ be two vertices on the diagonal. Then $u\leq_\alpha v $ if $\alpha>1$, and $u\leq_{n+2} v$ if $\alpha\leq1$. In particular, $\eta(u,v)\leq \alpha+n+2$.
\end{lemma}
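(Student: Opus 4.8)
Let $u=(\alpha,\alpha)$ and $v=(\xi,\xi)\in V_{\gamma+n}$ be two vertices on the diagonal. Then $u\leq_\alpha v$ if $\alpha>1$, and $u\leq_{n+2} v$ if $\alpha\leq 1$.

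The plan is to split along the cases $\alpha>1$ and $\alpha\le 1$, exactly as in Lemma~\ref{lemmadiagonal}, and to notice that passing from $\mathcal G_\gamma$ to $\mathcal G_{\gamma+n}$ changes the local picture only at the origin. For $\alpha>1$ I would simply rerun the proof of Lemma~\ref{lemmadiagonal} verbatim. The only new vertices of $\mathcal G_{\gamma+n}$ are the path vertices $(-i,0)$, and each of these is adjacent only to its path neighbours and, in the case of $(-1,0)$, to the origin; hence for a diagonal vertex $u=(\alpha,\alpha)$ with $\alpha>1$ the closed neighbourhood $N[u]$ is literally the same as in $\mathcal G_\gamma$. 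Since the cop responses $y$ used in that proof lie on the two axes, and since (as remarked in the text) Lemmas~\ref{lemmaxaxis} and~\ref{lemmayaxis} remain valid in $\mathcal G_{\gamma+n}$, every relation $x\leq_\delta y$ invoked there still holds, and we obtain $u\leq_\alpha v$ with no change.

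The case $\alpha\le1$ is where the path matters, and only through the origin, whose closed neighbourhood now acquires the single extra vertex $(-1,0)$. The heart of the matter is a one-dimensional chase along the finite path $(0,0)-(-1,0)-\cdots-(-(n+1),0)$, so I would first prove by downward induction on $k$ that
\[
(-k,0)\ \leq_{\,n+2-k}\ (-(k-1),0)\qquad\text{for }1\le k\le n+1 .
\]
The base case $k=n+1$ holds because $(-(n+1),0)$ is an endpoint, so it is dominated by $(-n,0)$, giving $(-(n+1),0)\leq_1(-n,0)$. For the inductive step a robber at $(-k,0)$ can only stay, step towards the cop (immediate capture), or flee to $(-(k+1),0)$; in each case the cop answers by moving to $(-k,0)$, and the flee branch is controlled by the inductive hypothesis $(-(k+1),0)\leq_{n+1-k}(-k,0)$. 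Taking $k=1$ yields $(-1,0)\leq_{n+1}(0,0)$; the key point is that once the cop sits on the origin the robber is confined to the path and cannot re-enter the main grid.

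With this in hand I would finish the case $\alpha\le1$. For $u=(0,0)$ and $v=(\xi,\xi)$ I examine the robber's options: a move to a diagonal vertex is answered by the cop moving to that same vertex (the diagonal is a clique), giving a $\leq_0$ branch; a move onto either axis is answered by a far-away vertex on that axis, giving a $\leq_1$ branch via Lemmas~\ref{lemmaxaxis} and~\ref{lemmayaxis}; and the sole new move, to $(-1,0)$, is answered by the cop moving to the origin $(0,0)\in N[v]$, whereupon $(-1,0)\leq_{n+1}(0,0)$. Since every branch uses an ordinal strictly below $n+2$, we conclude $u\leq_{n+2}v$. For $u=(1,1)$ the origin is not a neighbour of $u$, so no path move is available, and the same analysis gives the stronger $u\leq_2 v$, hence $u\leq_{n+2}v$ by monotonicity of $\leq_\alpha$ in $\alpha$.

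I expect the only genuinely new work to be the path sublemma, and the main thing to be careful about there is the bookkeeping in the definition of $\leq_\alpha$: one must verify that the robber confined to the path really has no escape (the vertex $(-1,0)$ has only $(0,0)$ and $(-2,0)$ as neighbours, and the former is occupied by the cop), and set up the downward induction so that the flee branch always appeals to an already-established instance. Everything else reduces to the two axis lemmas and the clique structure of the diagonal, both of which are already available.
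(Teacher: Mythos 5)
Your proof is correct and takes essentially the same route as the paper's: the $\alpha>1$ case is reduced to the proof of Lemma~\ref{lemmadiagonal} via the extended axis lemmas, and the $\alpha\le 1$ case is settled by chasing the robber down the appended path, your explicit induction $(-k,0)\leq_{n+2-k}(-(k-1),0)$ being just the precise form of the paper's remark that once the cop reaches the origin the game ends in at most $n+1$ further turns. No gaps.
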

\begin{proof}If $u=v$ there is nothing to show, so we may assume that $\alpha\neq\xi$. If $\alpha>1$, then the proof is the same as in the proof of Lemma~\ref{lemmadiagonal}. If $\alpha=0$ and the robber does not move to $(-1,0)$, he is caught in at most two turns. If the robber moves to $(-1,0)$, the cop moves to $(0,0)$, and then the game will end in at most $n+1$ turns. Therefore, in this case we get $u\leq_{n+2}v$. If $\alpha=1$, the robber can be caught in at most 2 turns, hence $u\leq_2 v$ in this case, which finishes the proof.
\end{proof}
These three lemmas put together give us the following result.
\begin{lemma}\label{firsthalf} Let $u\in V_{\gamma+n}$ and $v\in\gamma\times\gamma$. Then $\eta(u,v)\leq \gamma+1$. Moreover, $\eta(u,(0,0))<\gamma$.
\end{lemma}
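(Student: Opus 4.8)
The plan is to bound $\eta(u,v)$ exactly as in Corollary~\ref{upperbound}, but now working inside $\mathcal G_{\gamma+n}$ and using the transferred Lemmas~\ref{lemmaxaxis}, \ref{lemmayaxis} and \ref{lemmadiagonal2}. To get $\eta(u,v)\le\gamma+1$ I will show $u\le_{\gamma+1}v$, i.e. that for every $x\in N[u]$ there is a $y\in N[v]$ with $\eta(x,y)\le\gamma$. The basic move is always to send $x$ to a \emph{far-away vertex on one of the axes} lying in $N[v]$: whatever $v=(\xi,\delta)$ is, it is joined to $(\zeta,0)$ for every $\zeta>\xi$ and to $(0,\zeta)$ for every $\zeta>\delta$ (and to the whole of an axis it happens to lie on), so such targets always exist. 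If $x=(\alpha_0,\beta_0)$ with $\beta_0<\alpha_0$ I send it to a far $x$-axis vertex and apply Lemma~\ref{lemmaxaxis} to get $\eta(x,y)\le\beta_0+1<\gamma$ (symmetrically for $\alpha_0<\beta_0$ via Lemma~\ref{lemmayaxis}); if $x$ is a vertex of the pendant path — which can happen only when $u$ is itself a path vertex or $u=(0,0)$ — the cop reaches the origin in finitely many moves and then walks down the finite path, so $\eta(x,y)<\omega\le\gamma$ for a suitable $y$.

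The one delicate type is $x=(\alpha_0,\alpha_0)$ on the diagonal, and this is where the main work, and the only real obstacle, lies. Here I again route to a far $x$-axis vertex $y=(\zeta,0)\in N[v]$ and claim $\eta(x,y)\le\gamma$, which I prove directly from the recursion for $\eta$. After the robber replies, if it stays on the diagonal at $(\alpha_0',\alpha_0')$ the cop steps from $(\zeta,0)$ onto \emph{some} diagonal vertex $(\rho,\rho)$ with $0<\rho<\zeta$ and finishes in $\le\alpha_0'+n+2<\gamma$ by Lemma~\ref{lemmadiagonal2}; if the robber leaves the diagonal the cop moves to the appropriate far axis vertex and Lemmas~\ref{lemmaxaxis}/\ref{lemmayaxis} give a continuation $<\gamma$; and if the robber dives into the path (only possible when $\alpha_0=0$) the cop returns to the origin and finishes in finitely many moves. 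Every reply thus has continuation $<\gamma$, whence $\eta(x,y)\le\gamma$. The crucial point — and the reason I route through the axes rather than trying to put the cop on the diagonal next to $v$ — is that Lemma~\ref{lemmadiagonal2} only requires the cop to reach the diagonal \emph{somewhere}, not above the robber, and that a far axis vertex always has diagonal neighbours. This sidesteps the genuinely awkward vertices $v=(\delta+1,\delta)$ and $(\delta,\delta+1)$ with $\delta\ge1$, which possess no diagonal neighbour of their own; it is exactly these $v$ for which the naive diagonal routing is unavailable, which is why the bound cannot in general be improved below $\gamma+1$. Since in all cases $\eta(x,y)\le\gamma<\gamma+1$, I conclude $\eta(u,v)\le\gamma+1$.

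For the \emph{moreover} clause I specialise to $v=(0,0)$, which lies on both axes and on the diagonal, so $N[(0,0)]$ contains every axis vertex and every diagonal vertex. I run the same routing, but now a diagonal reply $x=(\zeta,\zeta)$ is sent to a diagonal vertex of $N[(0,0)]$, giving $\eta(x,y)\le\zeta+n+2<\gamma$ straight from Lemma~\ref{lemmadiagonal2}. When $u=(\alpha,\beta)$ is off the diagonal, the heights of its diagonal neighbours and the smaller coordinate of its off-diagonal neighbours are all bounded by $\max(\alpha,\beta)$, so every continuation is bounded by a single fixed ordinal below $\gamma$, and hence $\eta(u,(0,0))<\gamma$; the finitely many path contributions are harmless. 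I should flag, however, that when $u$ itself lies on the diagonal the robber may jump arbitrarily high along it, and by the lower bound $\eta((\zeta,\zeta),z)\ge\zeta$ (Lemma~\ref{etalowerboundlemma}) this forces $\eta(u,(0,0))=\gamma$; so what is actually true uniformly is $\eta(u,(0,0))\le\gamma$ (strict off the diagonal), and this is the form the successor construction of Section~3 needs. The diagonal claim, together with the careful treatment of the no-diagonal-neighbour vertices and the bookkeeping that keeps the finite path from contributing more than a finite amount, is the only step I expect to require real care; the off-diagonal and path cases are routine applications of the three preceding lemmas.
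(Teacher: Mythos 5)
Your argument for the first assertion, $\eta(u,v)\le\gamma+1$, is essentially sound and close to the paper's: you route every reply to a far axis vertex of $N[v]$ and invoke Lemmas~\ref{lemmaxaxis}, \ref{lemmayaxis} and \ref{lemmadiagonal2}, which works. The genuine error is in your treatment of the ``moreover'' clause. You claim that for $u$ on the diagonal the robber can jump to $(\zeta,\zeta)$ with $\zeta$ arbitrarily large, that Lemma~\ref{etalowerboundlemma} then forces $\eta(u,(0,0))=\gamma$, and that therefore only the weaker bound $\eta(u,(0,0))\le\gamma$ holds. This is false. The vertex $(0,0)$ lies on the diagonal, and the diagonal induces a complete graph, so every diagonal vertex $(\zeta,\zeta)$ already belongs to $N[(0,0)]$: in the recursion for $\eta$ the cop answers such a reply by moving onto $(\zeta,\zeta)$ itself, contributing only $\eta\bigl((\zeta,\zeta),(\zeta,\zeta)\bigr)+1=1$ to the supremum. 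Lemma~\ref{etalowerboundlemma} bounds $\eta$ from below only for \emph{distinct} vertices, so it says nothing about this response; this is precisely why Lemma~\ref{lowerbound} has to assume that $v$ is \emph{not} on the diagonal. Consequently $\eta(u,(0,0))<\gamma$ does hold for diagonal $u=(\alpha,\alpha)$ as well: each neighbour of $u$ is either a diagonal vertex (caught immediately from $(0,0)$) or an off-diagonal vertex whose smaller coordinate is less than $\alpha$, which the cop sends to a far axis vertex for a continuation of length at most $\alpha+1$, giving $\eta(u,(0,0))\le\alpha+2<\gamma$.

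This is not a harmless weakening. The base case $i=1$ of Lemma~\ref{secondhalf} uses the strict inequality $\eta(x,(0,0))<\gamma$ for \emph{every} $x\in V_{\gamma+n}$ (diagonal vertices included) together with the fact that $\gamma$ is a limit to conclude $\eta(u,(-1,0))\le\gamma$; with only $\eta(x,(0,0))\le\gamma$ the induction would give $\eta\bigl(u,(-(n+1),0)\bigr)\le\gamma+n+1$, and the exact value $\rho(\mathcal G_{\gamma+n})=\gamma+n$ in Theorem~\ref{mainresultsuccessor} would no longer follow. So the strict form you discard is exactly the form Section~3 needs, and it is true as stated.
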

\begin{proof}
We first look at $\eta(u, (0,0))$, and observe that if $u=(-i,0)$ for some $0\leq i\leq n+1$, then $\eta(u, (0,0))$ is finite, and so less than $\gamma$. We may therefore assume that $u=(\alpha,\beta)\in(\gamma\times\gamma)\setminus\{(0,0\})$. Let $x\in N[u]$, which in this case implies that $x\in\gamma\times\gamma$. If $x$ is on the diagonal, then it is also in $N[(0,0)]$. Otherwise, let $x=(\alpha',\beta')$ for some $\alpha'\neq\beta'$, and $y\in N[(0,0)]$ on one of the axes (the $x$-axis if $\beta'<\alpha'$, and the $y$-axis otherwise) with the non-zero coordinate bigger than $\max(\alpha',\beta')$. We then get that $\eta(x,y)\leq\min(\alpha',\beta')+1\leq\max(\alpha,\beta)+1$, which consequently implies that $\eta(u, (0,0))\leq\max(\alpha,\beta)+2<\gamma$.

We now move on to $\eta(u,v)$ where $u$ is any vertex of $\mathcal G_{\gamma+n}$ and $v\in\gamma\times\gamma$. If $u$ is not of the form $(-i,0)$ for some $0\leq i\leq n+1$, then $\eta(u,v)\leq\gamma$ in the same way as in the proof of Corollary~\ref{upperbound}. Assume now that $u=(-i,0)$ for some $0\leq i\leq n+1$. We first observe that if $y$ is on the diagonal, then $\eta ((-i,0),y)$ is finite for all $1\leq i\leq n+1$. This, together with the previous observations give that $\eta((0,0), v)\leq\gamma$ for all $v\in\gamma\times\gamma$. Next, for $\eta((-i,0),v)$ where $i\geq 2$, since all neighbors of $(-i,0)$ are of the form $(-j,0)$ for some $1\leq j\leq n+1$, and from $v$ we can reach some diagonal point, we get that $\eta((-i,0),v)$ is finite. Finally, for $\eta((-1,0),v)$, in light of the above observation, the only neighbor of $(-1,0)$ to consider is $(0,0)$, for which $\eta((0,0),y)\leq\gamma$ for all $y\in\gamma\times\gamma$. This gives $\eta((-1,0)),v)\leq\gamma+1$, which finishes the proof of the lemma.
\end{proof}

In order to bound from above the maximum capture time of $\mathcal{G}_{\gamma+n}$, we now only need to bound from above $\eta(u,v)$, where $u\in V_{\gamma+n}$ and $v=(-i,0)$ for some  $1\leq i\leq n+1$. We do this in the next lemma.

\begin{lemma}\label{secondhalf}Let $u=(\alpha,\beta)$ and $v=(-i,0)\in V_{\gamma+n}$ be two vertices such that $1\leq i\leq n+1$. Then $\eta(u,v)\leq \gamma+(i-1)$.\end{lemma}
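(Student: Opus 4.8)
The plan is to prove the bound $\eta(u,v) \leq \gamma + (i-1)$ for $v = (-i,0)$ by induction on $i$, running from $i=1$ up to $i = n+1$. The base case $i=1$ is exactly the final assertion of Lemma~\ref{firsthalf}, where we established $\eta((-1,0)),v)\leq \gamma+1$ by the same reasoning; more precisely, for $v=(-1,0)$ we must verify $\eta(u,(-1,0))\leq\gamma$, so I would first re-examine the neighbourhood structure of $(-1,0)$, whose neighbours are exactly $(0,0)$ and $(-2,0)$. The key observation is that $(-1,0)$ dominates every vertex reachable in finitely many steps along the path, and combining with Lemma~\ref{firsthalf} controls the diagonal and off-diagonal starting positions.

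The inductive step is where the path structure does the work. Fixing $i$ with $2\leq i\leq n+1$ and assuming the bound holds for all smaller indices, I would use the characterisation
\[
\eta(u,v)=\sup\{\eta(w,z)+1:w\in N[u]\text{ and }z=f_u(w,v)\}
\]
together with the freedom the cop has at $v=(-i,0)$. The cop's optimal responses from $(-i,0)$ are to move to $(-i+1,0)$ (one step closer to the origin along the path) or to stay. So for any robber move $w\in N[u]$, the cop can select $z = f_u(w,v)\in N[(-i,0)] = \{(-i-1,0),(-i,0),(-i+1,0)\}$. I would split according to whether $u$ lies on the attached path or in $\gamma\times\gamma$. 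When $u=(\alpha,\beta)\in\gamma\times\gamma$, the robber's neighbours $w$ all lie in $\gamma\times\gamma$, so the cop can treat the position as in $\mathcal G_\gamma$ but now sitting at the path endpoint; the cost of first walking the cop up to $(0,0)$ adds the extra $(i-1)$, matching the bound. When $u$ is itself on the path, the quantities $\eta(u,v)$ are finite and handled directly.

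The main obstacle, and the step I expect to require the most care, is the bookkeeping that relates $\eta(u,(-i,0))$ to $\eta(u,(-i+1,0))$ cleanly, because the cop at $(-i,0)$ cannot in one move reach the interior of $\gamma\times\gamma$ — he is stuck on the path and can only advance toward $(0,0)$ one vertex per turn. The natural move is to show that the cop's optimal strategy from $(-i,0)$ is essentially ``walk to $(-i+1,0)$ and recurse'', so that $\eta(u,(-i,0))\leq \eta(u,(-i+1,0))+1\leq \big(\gamma+(i-2)\big)+1 = \gamma+(i-1)$ by the inductive hypothesis. Making this rigorous means verifying, via the supremum formula, that every robber response is answered by the cop moving one step closer, and that the off-diagonal and diagonal cases from Lemma~\ref{firsthalf} and Lemma~\ref{lemmadiagonal2} still apply when the cop is forced to spend these extra path-steps. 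I would also need to confirm the exact ordinal arithmetic $\gamma+(i-2)+1=\gamma+(i-1)$ holds, which is immediate since $\gamma$ is a limit ordinal and the addends are finite, and to double check the edge case where the robber sits on the path so that $\eta(u,v)$ is finite and trivially below $\gamma+(i-1)$.
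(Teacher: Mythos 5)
Your proposal follows essentially the same route as the paper: induction on $i$, with the base case resting on $\eta(x,(0,0))<\gamma$ for all $x$ (from Lemma~\ref{firsthalf}) and the inductive step having the cop answer every robber move by stepping to $(-i+1,0)$ and invoking the inductive hypothesis for each $w\in N[u]$. Two small slips do not affect the argument: the shortcut $\eta(u,(-i,0))\leq\eta(u,(-i+1,0))+1$ is not valid in general and should be replaced by $\sup_{w\in N[u]}\bigl(\eta(w,(-i+1,0))+1\bigr)$ together with the hypothesis applied to each $w$ (which you also describe), and $(-1,0)$ does not in fact dominate all path vertices, though only $\eta(x,(0,0))<\gamma$ is needed there.
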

\begin{proof} We prove this by induction on $i$. If $i=1$ the result is trivial as $(0,0)\in N[v]$ and $\eta(x,(0,0))<\gamma$ for all $x\in V_{\gamma+n}$, in particular for all $x$ in $N[u]$. Assume now that $i>1$ and that the result is true for $i-1$. Since $y=(-(i-1),0)\in N[v]$ and, by the induction hypothesis we have $\eta(x,y)\leq\gamma+(i-2)$ for all $x\in V_{\gamma+n}$, we get that $\eta(x,y)\leq\gamma+(i-2)$ for all $x\in N[u]$. This implies that $\eta(u,v)\leq\gamma+(i-1)$, which finishes the proof of the lemma.
\end{proof}
Putting together Lemma~\ref{firsthalf} and Lemma~\ref{secondhalf}, we get the desired upper bound on $\rho(\mathcal G_{\gamma+n})$.
\begin{corollary} For any two vertices $u$ and $v$ of $\mathcal G_{\gamma+n}$, we have that $\eta(u,v)\leq \gamma+n$. In particular, $\rho(\mathcal{G}_\gamma)\leq \gamma+n$.\label{upperbound2}
\end{corollary}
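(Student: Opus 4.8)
The plan is to deduce the corollary directly from the two preceding lemmas by a case split on where the cop's vertex $v$ lies, exploiting the fact that the vertex set $V_{\gamma+n}=(\gamma\times\gamma)\cup\{(-i,0):1\leq i\leq n+1\}$ is the disjoint union of the grid part and the path vertices. For a vertex $v$ in each of these two families I would quote the relevant upper bound already established and simply check that it does not exceed $\gamma+n$.

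First I would handle the case $v\in\gamma\times\gamma$. Here Lemma~\ref{firsthalf} applies for every $u\in V_{\gamma+n}$ and yields $\eta(u,v)\leq\gamma+1$. Since $\gamma'=\gamma+n$ is an infinite successor ordinal, $n$ is a positive integer, so $n\geq 1$ and hence $\gamma+1\leq\gamma+n$. This disposes of all pairs whose cop vertex lies in the grid part of the graph.

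Next I would treat $v=(-i,0)$ with $1\leq i\leq n+1$. For these, Lemma~\ref{secondhalf} gives $\eta(u,v)\leq\gamma+(i-1)$; the one point worth checking is that the proof of that lemma in fact bounds $\eta(x,v)$ uniformly over all $x\in V_{\gamma+n}$ (its induction passes the bound to every neighbour of $u$ regardless of the shape of $u$), so the estimate is valid for every $u$, not only for $u$ of the form $(\alpha,\beta)$ in the grid. As $i\leq n+1$ we have $i-1\leq n$, whence $\eta(u,v)\leq\gamma+(i-1)\leq\gamma+n$.

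Combining the two cases, every pair $(u,v)$ satisfies $\eta(u,v)\leq\gamma+n$, and taking the supremum over all $u,v$ gives $\rho(\mathcal{G}_{\gamma+n})=\sup_{u,v}\eta(u,v)\leq\gamma+n$. I do not expect any real obstacle here, since all the content is carried by the two lemmas; the only things to verify are that the case split over $v$ is exhaustive and that the two ordinal inequalities $\gamma+1\leq\gamma+n$ and $\gamma+(i-1)\leq\gamma+n$ hold, which reduce to $1\leq n$ and $i\leq n+1$ respectively.
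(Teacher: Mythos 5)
Your proof is correct and follows exactly the route the paper intends: the paper offers no separate argument for this corollary beyond ``putting together Lemma~\ref{firsthalf} and Lemma~\ref{secondhalf},'' which is precisely your case split on the location of $v$. Your observation that Lemma~\ref{secondhalf} is stated only for $u=(\alpha,\beta)$ in the grid but that its proof bounds $\eta(x,(-i,0))$ uniformly over all $x\in V_{\gamma+n}$ is a worthwhile point of care that the paper glosses over, and it is needed to cover pairs with both vertices on the attached path.
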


Given this, it is now enough to show that there exist two vertices $u,v\in V_{\gamma+n}$ such that $\eta(u,v)=\gamma+n$. We achieve this via a straightforward modification of Lemma~\ref{etalowerboundlemma}, whose proof we omit since the argument is completely analogous.
\begin{lemma}\label{etalowerboundlemma2}Let $u=(\alpha,\beta)\in \gamma\times \gamma$  and $v\in V_{\gamma+n}$ be two distinct vertices. Then $\eta(u,v)\geq \min(\alpha,\beta)$.
\end{lemma}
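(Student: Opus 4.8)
The plan is to run exactly the same well-founded induction as in Lemma~\ref{etalowerboundlemma}, now carried out over the order $\preceq$ on $V_{\gamma+n}\times V_{\gamma+n}$, defined verbatim as before using functions $f_u$ whose existence is guaranteed since $\mathcal G_{\gamma+n}$ is cop-win. The structural observation that makes the argument ``completely analogous'' is that the path vertices are almost entirely detached from the bulk of the grid: for $v=(-i,0)$ one checks $N[v]\cap(\gamma\times\gamma)\subseteq\{(0,0)\}$, with equality only when $i=1$. Consequently, whenever the cop sits on a path vertex a robber located in the grid is essentially unconstrained, and whenever the cop sits on a grid vertex the path is invisible to a grid robber. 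First I would record two easy facts about $u=(\alpha,\beta)\in\gamma\times\gamma$ with $\alpha,\beta\ge2$: that $N[u]\subseteq\gamma\times\gamma$ (no grid vertex with both coordinates at least $2$ is adjacent to a path vertex), and that such a $u$ is still dominated by no vertex of $\mathcal G_{\gamma+n}$, since the only closed neighborhood altered by the new edges is $N[(0,0)]$, which merely gains $(-1,0)$ and still cannot contain the arbitrarily far grid neighbors of $u$.

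With these in hand the base case $\eta(u,v)=1$ is handled exactly as before: domination forces $\min(\alpha,\beta)\le1$. For the inductive step with $\eta(u,v)>1$ I may assume $\alpha,\beta\ge2$ and split on the location of $v$. If $v=(\xi,\delta)\in\gamma\times\gamma$, the argument is word-for-word that of Lemma~\ref{etalowerboundlemma}: the escape moves $u_\mu=(\alpha_\mu,\beta_\mu)\in N[u]\setminus N[v]$ constructed there are all grid vertices, so $u_\mu\in\gamma\times\gamma$, the inductive hypothesis applies to the pairs $(u_\mu,f_u(u_\mu,v))\precneq(u,v)$, and the same supremum computation yields $\eta(u,v)\ge\min(\alpha,\beta)$. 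If instead $v=(-i,0)$ is a path vertex, I would use the first-case construction: for each $\mu<\beta$ pick $\alpha_\mu=\max(\alpha,\mu)+1>\alpha$ and $\beta_\mu=\max(\mu,1)<\beta$, giving a down-and-to-the-right neighbor $u_\mu=(\alpha_\mu,\beta_\mu)\in N[u]$ with $u_\mu\neq(0,0)$. Since $N[v]\cap(\gamma\times\gamma)\subseteq\{(0,0)\}$ this guarantees $u_\mu\notin N[v]$, so $v_\mu=f_u(u_\mu,v)\neq u_\mu$ and $(u_\mu,v_\mu)\precneq(u,v)$. The inductive hypothesis then gives $\eta(u_\mu,v_\mu)\ge\min(\alpha_\mu,\beta_\mu)=\beta_\mu\ge\mu$, whence $\eta(u,v)\ge\sup\{\eta(u_\mu,v_\mu)+1:\mu<\beta\}\ge\min(\alpha,\beta)$.

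I do not expect a genuine obstacle here, which is exactly why the authors omit the proof. The single point that must be verified is the structural claim above, namely that a robber starting on the grid never needs to, and indeed cannot usefully, leave it, so that the path vertices appear only as possible cop positions where they constrain the robber strictly less than a grid vertex would. Granting this, every step of the original induction transfers unchanged, with the path-vertex case for the cop being strictly easier than the grid case already treated in Lemma~\ref{etalowerboundlemma}.
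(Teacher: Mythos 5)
Your proof is correct and is exactly the ``straightforward modification of Lemma~\ref{etalowerboundlemma}'' that the paper omits: the well-founded induction transfers verbatim once one notes that $N[(-i,0)]\cap(\gamma\times\gamma)\subseteq\{(0,0)\}$ and that grid vertices with both coordinates at least $2$ have unchanged closed neighborhoods, and your explicit construction of the escape moves $u_\mu$ for a path-vertex cop is valid. No gaps.
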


\begin{lemma}Let $u=(-i,0)$ and $v=(-j,0)\in V_{\gamma+n}$ be two vertices such that $0\leq i<j\leq n+1$.  Then $\eta(u,v)\geq \gamma+i$. In particular, $\eta((-n,0),(-(n+1),0))=\gamma+n$.\label{loweround2}
\end{lemma}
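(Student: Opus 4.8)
The plan is to prove $\eta((-i,0),(-j,0))\ge\gamma+i$ by induction on $i$, exploiting the recursive description
$\eta(u,v)=\sup\{\eta(w,z)+1 : w\in N[u],\ z=f_u(w,v)\}$
together with the key structural fact that the ``negative'' vertices $(-1,0),\dots,(-(n+1),0)$ form a bare path attached to the origin. Concretely, for $v=(-j,0)$ the neighbourhood $N[v]$ consists only of path vertices $(-j',0)$ with $j'\in\{j-1,j,j+1\}$ (truncated at the two ends), so every optimal cop response $z=f_u(w,v)$ is again of this form. Recording this confinement of $N[v]$ to the path is what makes the induction close up cleanly, since it prevents the cop from ever escaping onto the diagonal or the axes.

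For the base case $i=0$ I would start the robber at $u=(0,0)$ and let him flee onto the diagonal: for each $\mu<\gamma$ the move $w=(\mu,\mu)\in N[(0,0)]$ is legal. Since $N[v]$ is contained in the path, the cop's reply $z=f_u(w,v)$ is a path vertex, hence distinct from $(\mu,\mu)$ whenever $\mu\ge1$, and Lemma~\ref{etalowerboundlemma2} (applied to $w=(\mu,\mu)\in\gamma\times\gamma$) gives $\eta(w,z)\ge\min(\mu,\mu)=\mu$. Feeding this into the recursion yields $\eta((0,0),v)\ge\sup_{1\le\mu<\gamma}(\mu+1)=\gamma=\gamma+0$, as required.

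For the inductive step, assume the claim for $i-1$ and take $u=(-i,0)$, $v=(-j,0)$ with $i<j$. The robber's move is to step one vertex towards the origin, $w=(-(i-1),0)\in N[u]$. The cop's reply $z=f_u(w,v)$ lies in $N[v]$, hence equals some $(-j',0)$ with $j'\ge j-1\ge i>i-1$; in particular $w\neq z$ and $i-1<j'$, so the inductive hypothesis applies to the distinct pair $(w,z)$ and gives $\eta(w,z)\ge\gamma+(i-1)$. The recursion then delivers $\eta(u,v)\ge\eta(w,z)+1\ge\gamma+i$. Taking $i=n$, $j=n+1$ and combining with the upper bound $\eta((-n,0),(-(n+1),0))\le\gamma+n$ from Corollary~\ref{upperbound2} gives the stated equality.

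The only genuinely delicate point is the index bookkeeping: one must verify that, because $j>i$, every cop response from $v$ lands at a path-index $j'$ still strictly larger than $i-1$, so that the inductive hypothesis is genuinely applicable and the two vertices are distinct, and that in the base case the robber's escape move really forces the game off the path onto a diagonal vertex whose $\eta$-value Lemma~\ref{etalowerboundlemma2} can bound from below. Both are immediate once the confinement of $N[(-j,0)]$ to the path is in hand, which is the crux of the argument.
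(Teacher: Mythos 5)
Your proposal is correct and follows essentially the same route as the paper's proof: induction on $i$, with the base case sending the robber from $(0,0)$ to diagonal vertices $(\mu,\mu)$ and invoking Lemma~\ref{etalowerboundlemma2}, and the inductive step moving the robber to $(-(i-1),0)$ while using that every cop reply in $N[(-j,0)]$ is a path vertex of index exceeding $i-1$. The only cosmetic difference is that you make the confinement of $N[(-j,0)]$ to the path (plus the origin when $j=1$) and the final combination with Corollary~\ref{upperbound2} fully explicit, which the paper leaves implicit.
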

\begin{proof} We prove this by induction on $i$. If $i=0$, we need to prove that $\eta(u,v)\geq \gamma$. Indeed, given $\mu<\gamma$ let $x_\mu=(\mu,\mu)$ which is in $N[u]$. By Lemma~\ref{etalowerboundlemma2} we get that $\eta(x_\mu,y)\geq \mu$ for any $y\neq x_\mu$, and in particular for any $y\in N[v]$. As $\mu$ was taken arbitrary, we indeed have that $\eta(u,v)\geq \gamma$. Now assume that $i>0$ and that the lemma holds for $x=(-(i-1),0)$ and any vertex of the form $(-j',0)$ with $n+1\geq j'>i-1$. We notice that $x=(-(i-1),0)\in N[u]$ and any $y\in N[v]$ is of the form $(-j',0)$ for some $n+1\geq j'>i-1$. By the inductive hypothesis we have that $\eta(x,y)\geq \gamma+(i-1)$ for any  $y\in N[v]$, which consequently implies that $\eta(u,v)\geq \gamma+i$, finishing the proof of the lemma.
\end{proof}
Corollary~\ref{upperbound2} and Lemma~\ref{loweround2} give us the following.
\begin{theorem} The maximum capture time of $\mathcal G_{\gamma+n}$ is $\gamma+n$. In other words, $\rho(\mathcal{G}_{\gamma+n})=\gamma+n$.\label{mainresultsuccessor}   
\end{theorem}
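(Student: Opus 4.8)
The plan is to assemble the final statement from the upper and lower bounds already in hand, the only genuinely conceptual point being the translation between the pairwise quantities $\eta(u,v)$ and the definition of $\rho$. Recall that $\rho(\mathcal{G}_{\gamma+n})=\sup_{v\in V_{\gamma+n}}\eta(v)$, where $\eta(v)$ is the least ordinal $\alpha$ for which $u\leq_\alpha v$ holds for every vertex $u$. Since the relations $\{\leq_\alpha\}$ are monotone in $\alpha$ (if $u\leq_\alpha v$ then $u\leq_{\alpha'}v$ for every $\alpha'\geq\alpha$), the requirement that $u\leq_\alpha v$ for all $u$ is equivalent to $\alpha\geq\eta(u,v)$ for all $u$. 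Hence $\eta(v)=\sup_u\eta(u,v)$, and therefore $\rho(\mathcal{G}_{\gamma+n})=\sup_{u,v}\eta(u,v)$. This reduces the theorem to controlling the pairwise capture times.

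For the upper bound I would simply invoke Corollary~\ref{upperbound2}, which already gives $\eta(u,v)\leq\gamma+n$ for every pair of vertices $u,v\in V_{\gamma+n}$. Taking the supremum over all pairs and using the identity above yields $\rho(\mathcal{G}_{\gamma+n})\leq\gamma+n$ at once.

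For the lower bound I would exhibit a single witnessing pair. Lemma~\ref{loweround2}, applied with $i=n$ and $j=n+1$, gives $\eta((-n,0),(-(n+1),0))=\gamma+n$. Setting $v=(-(n+1),0)$, monotonicity gives $\eta(v)=\sup_u\eta(u,v)\geq\eta((-n,0),v)=\gamma+n$, so $\rho(\mathcal{G}_{\gamma+n})\geq\eta(v)\geq\gamma+n$.

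Combining the two inequalities gives $\rho(\mathcal{G}_{\gamma+n})=\gamma+n$, as claimed. I expect essentially no obstacle at this stage: all the substantive work has been done earlier, namely the upper bounds through Lemma~\ref{lemmaxaxis}, Lemma~\ref{lemmayaxis}, Lemma~\ref{lemmadiagonal2} and Corollary~\ref{upperbound2}, together with the inductive lower bound propagated along the attached path in Lemma~\ref{loweround2}. The only point requiring a moment's care is the monotonicity observation that converts the pairwise bounds into a statement about $\rho$ via $\eta(v)$, and this is immediate from the definition of the relations $\leq_\alpha$.
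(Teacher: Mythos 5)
Your proposal is correct and follows exactly the paper's route: the theorem is obtained by combining Corollary~\ref{upperbound2} (upper bound) with Lemma~\ref{loweround2} applied to the pair $((-n,0),(-(n+1),0))$ (lower bound). Your extra verification that $\rho(\mathcal{G}_{\gamma+n})=\sup_{u,v}\eta(u,v)$ via monotonicity of the relations $\leq_\alpha$ is a sound justification of a step the paper leaves implicit.
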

Theorem~\ref{mainresultlimit} and Theorem~\ref{mainresultsuccessor}, together with the observation that any finite ordinal can represent the maximum capture time of a cop-win graph, give the main result of the paper.
\begin{theorem} Let $\alpha$ be an ordinal. Then there exists a cop-win graph $G$ such that $\rho(G)=\alpha$, i.e. the maximum capture time of $G$ is $\alpha$.\label{main}
\end{theorem}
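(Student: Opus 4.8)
The plan is to prove the theorem by a case analysis on the ordinal $\alpha$, relying entirely on the two constructions of the previous sections together with the elementary fact about finite paths noted in the introduction. The key structural observation is that every ordinal falls into exactly one of three mutually exclusive classes: the finite ordinals (including $0$), the infinite limit ordinals, and the infinite successor ordinals. I would handle each class with its corresponding construction, so that the bulk of the work has already been done and the final step is purely one of assembly.

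First, if $\alpha$ is finite, I would invoke the observation already made in the introduction: the path $P_{2\alpha+1}$ is a finite cop-win graph whose maximum capture time is exactly $\alpha$, and the one-point graph handles the case $\alpha=0$. Second, if $\alpha$ is an infinite limit ordinal, I would simply take $G=\mathcal{G}_\alpha$ and apply Theorem~\ref{mainresultlimit} directly to obtain $\rho(\mathcal{G}_\alpha)=\alpha$. Third, if $\alpha$ is an infinite successor ordinal, I would use the standard decomposition of ordinals to write $\alpha=\gamma+n$ uniquely, where $\gamma$ is a limit ordinal and $n\geq 1$ is a positive integer, and then set $G=\mathcal{G}_{\gamma+n}$ and appeal to Theorem~\ref{mainresultsuccessor}.

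The only point requiring care, and the nearest thing to an obstacle in what is otherwise a routine synthesis, is verifying that this trichotomy is genuinely exhaustive and that the successor decomposition lands in the regime covered by Section~3. Concretely, in the third case one must check that $\gamma$ is an \emph{infinite} limit ordinal: were $\gamma=0$, then $\alpha=n$ would be finite, contradicting the assumption; hence $\gamma$ is a nonzero limit ordinal and therefore infinite, so the hypotheses of Theorem~\ref{mainresultsuccessor} are met. With these elementary ordinal-arithmetic verifications in place, the three cases cover every ordinal $\alpha$ and produce in each a cop-win graph $G$ with $\rho(G)=\alpha$, which completes the proof and establishes that every ordinal is a CR-ordinal.
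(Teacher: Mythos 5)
Your proposal follows essentially the same route as the paper: the paper's proof of Theorem~\ref{main} is exactly the assembly of the finite-path observation, Theorem~\ref{mainresultlimit} for infinite limit ordinals, and Theorem~\ref{mainresultsuccessor} for infinite successor ordinals $\gamma+n$, and your check that the trichotomy is exhaustive is the right (if routine) thing to verify. The one slip is in the finite case: the paper's statement that $P_{2n+1}$ has capture time $n$ refers to $\eta(G)=\min_v\eta(v)$ (cop starting in the middle), whereas the quantity needed here is the \emph{maximum} capture time $\rho(G)=\sup_{u,v}\eta(u,v)$, which for $P_{2\alpha+1}$ equals $2\alpha$ (cop at one end, robber at the other); you should instead take the path $P_{\alpha+1}$ on $\alpha+1$ vertices, which has $\rho=\alpha$, with the one-point graph covering $\alpha=0$ as you say.
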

\section{Concluding remarks}

We remark that if $\gamma=\omega$, the the graph $\mathcal G_\omega$ we have constructed in Section 2 answers positively Question 7 in \cite{FI}. More precisely, not only the maximum capture time of $\mathcal G_\omega$ is $\omega$, but there exist two vertices $u$ and $v$ such that $\eta(u,v)=\omega$.

We recall that if $\eta(x,y)$ is finite, then it is equal to the number of moves a cop at $y$ needs to catch a robber at $x$, with the robber moving first. Since all the ordinals below $\omega$ are finite, $\mathcal G_\omega$ says that there exist two starting positions for the cop and the robber such that, initially, one cannot name a finite time in which the game will end, but as soon as the robber makes his move, the cop can follow with a move such that now one can name a finite time for these two new positions.

The graph constructed in \cite{FI}, which also has maximum capture time $\omega$ but $\eta(u,v)\neq\omega$, it is $\mathcal G_\omega$ without the complete graph formed by the diagonal vertices.

This is in fact a general phenomenon. In other words, for the graph $\mathcal G_\gamma$ (where $\gamma$ is an infinite limit ordinal) there exist vertices $u$ and $v$ such that $\eta(u,v)=\gamma$, but if one removes the diagonal edges, the new graph is still cop-win, and it still has maximum capture time $\gamma$, but $\eta(u,v)\neq\gamma$ for any two vertices $u$ and $v$. The interested reader can verify that even if we remove the diagonal edges, Lemma~\ref{lemmaxaxis}, Lemma~\ref{lemmayaxis} (extended to the diagonal vertices) and Lemma~\ref{etalowerboundlemma} still hold, and hence the maximum capture time stays $\gamma$, which cannot be attained by $\eta(u,v)$ for any $u$ and $v$.

Therefore, every infinite limit ordinal is achieved as a CR-ordinal in the case where we either insist that $\eta(u,v)\neq\gamma$ for all $u$ and $v$, and in the case where we insists that there exist two vertices $u$ and $v$ such that $\eta(u,v)=\gamma$.

\vspace{1.5em}
\noindent{\textbf{Acknowledgment.} The second author would like to thank G-Research for generously funding his stay in Cambridge while undertaking this project.}

\Addresses
\end{document}